\newtheorem*{Prob*}{Problem}
\newtheorem{cont}{cont}[section]
\newtheorem{theorem}[cont]{Theorem}
\newtheorem*{teo*}{Theorem}
\newtheorem{proposition}[cont]{Proposition}
\newtheorem{lemma}[cont]{Lemma}
\newtheorem{corollary}[cont]{Corollary}
\newtheorem{defn}[cont]{Definition}
\newtheorem{pblm}[cont]{Problem}
\newtheorem*{Enunciato*}{Enunciato}
\numberwithin{equation}{section}
\newtheorem{finalremark}[cont]{Final Remark}
\newtheorem*{not*}{Notation}
\newcommand{\cO}{{\mathcal O}}
\newcommand{\shF}{\mathcal{F}}
\newcommand{\shH}{\mathcal{H}}
\newcommand{\cL}{\mathcal{L}}
\newcommand{\shM}{\mathcal{M}}
\newcommand{\shE}{\mathcal{E}}
\newcommand{\PP}{\mathbb{P}}
\newcommand{\ZZ}{\mathbb{Z}}
\newcommand{\odi}[1]{\mathcal{O}_{#1}}
\newcommand{\arr}{\longrightarrow}
\DeclareMathOperator{\Hl}{H} \DeclareMathOperator{\h}{h}
 \DeclareMathOperator{\Hom}{Hom}
 \DeclareMathOperator{\Aut}{Aut}
\DeclareMathOperator{\Mat}{Mat}
\DeclareMathOperator{\depth}{depth}
\DeclareMathOperator{\Proj}{Proj} \DeclareMathOperator{\di}{dim}
\DeclareMathOperator{\codim}{codim}
\DeclareMathOperator{\Ext}{Ext} \DeclareMathOperator{\pd}{pd}
 \DeclareMathOperator{\im}{im}
\begin{document}

\subjclass[2010]{Primary 14F99; Secondary 14J99}
\keywords{Representation type, Arithmetically Cohen-Macaulay bundles, Geometrically wild}

%\begin{document}
\title{On the representation type of a projective variety}

\author[Rosa M. Mir\'{o}-Roig]{Rosa M. Mir\'{o}-Roig$^{*}$}
\address{Facultat de Matem\`atiques, Department d'Algebra i Geometria, Gran Via des les Corts Catalanes 585, 08007 Barcelona, Spain}
\email{miro@ub.edu}

\date{\today}
\thanks{$^*$ Partially supported by MTM2010-15256.}

\begin{abstract}  Let $X\subset \PP^n$ be a smooth arithmetically Cohen-Macaulay variety.
We prove that the restriction $\nu _{3|X}$ to $X$ of the Veronese 3-uple embedding
$\nu _3:\PP^n \longrightarrow \PP^{{n+3\choose 3}-1}$ embeds $X$ as a variety
of wild representation type.
\end{abstract}

\maketitle

\tableofcontents

\section{Introduction}

\vskip 2mm The importance of the existence of arithmetically Cohen-Macaulay  bundles (i.e. bundles without intermediate cohomology)  on a non-singular projective variety
relies on the fact that a natural way to measure the complexity of a non-singular projective variety is to ask for the family of non-isomorphic indecomposable arithmetically Cohen-Macaulay (shortly, ACM) bundles that it supports. This problem has a long and interesting history behind.  A seminal result is due to Horrocks (cf. \cite{Hor}) who asserted
that, up to twist, there is only one indecomposable ACM bundle on $\PP^n$: $\odi{\PP^n}$. This corresponds with the general idea that a "simple"
variety should have associated a "simple" category of ACM bundles. Following these lines, a cornerstone result was the classification  of ACM varieties of {\em finite representation type}, i.e., varieties that support (up to twist and isomorphism) only a finite number of indecomposable ACM bundles. It turned out that they fall into a very short list: $\PP^n$, a smooth hyperquadric $Q\subset \PP^n$,  a cubic scroll in $\PP^4$, the Veronese surface in $\PP^5$, a rational normal curve and three or less reduced points in  $\PP^2$  (cf. \cite[Theorem C]{BGS} and \cite[p. 348]{EH}).

 \vskip 2mm For the rest of ACM varieties, it became an interesting problem to  give a criterium to split them into a finer classification.  Inspired in Representation Theory, it has been proposed the classification of ACM varieties  as {\em finite, tame or wild} (see Definition \ref{wild})  according to the complexity of their associated category of ACM bundles.
So far only few examples of varieties of wild
representation type are known: curves of genus $g\geq 2$ (cf. \cite{DG}), del Pezzo
surfaces and Fano blow-ups of points in $\PP^n$ (cf. \cite{MR4}, the cases of the
cubic surface and the cubic threefold have also been handled in \cite{CH1}), ACM
rational surfaces on $\PP^4$ (cf. \cite{MR1}), any Segre variety unless the quadric
 surface in $\PP^3$ (cf. \cite[Theorem 4.6]{CMP}), and any rational normal scroll unless $\PP^n$,
 the rational normal curve, the  quadric surface in $\PP^3$ and the cubic scroll in $\PP^4$
 which are of finite representation type (cf. \cite[Theorem 3.8]{MR}).

\vskip 4mm

The representation type of an ACM variety strongly depends on the chosen polarization. For instance, we have mentioned that the quadric surface $X\cong \PP^1\times \PP^1\subset \PP^3$ is a variety of finite representation type with respect to the very ample line bundle $\cO _{\PP^1\times \PP^1}(1,1)$. However, the smooth quadric surface $X$ embedded in $\PP^8$ through the very ample anticanonical  divisor $\cO _{\PP^1\times \PP^1}(2,2)$
is of wild representation type (cf. \cite[Theorem 4.10]{MR4}). This leads to  the following problem

\begin{pblm} \label{problem} (a)  Given an ACM variety $X\subset \PP^n$ , is there an integer $N_X$ such that $X$ can be embedded in $\PP^{N_X}$ as a variety of wild representation type?

(b) If so, what is the smallest possible integer $N_X$?
\end{pblm}

The goal of this short note is to answer affirmatively Problem (a) and to provide an upper bound for $N_X$.   In other words, for any
smooth projective variety $X$ there is an embedding of $X$ into a projective space $\PP^{N_X}$ such that the corresponding homogeneous coordinate ring has
arbitrary large families of non-isomorphic indecomposable graded Maximal Cohen-
Macaulay modules. Actually, it is proved that such an embedding can be
obtained as the composition of the "original" embedding $X \subset \PP^n$ and the
Veronese 3-uple embedding $\nu _3:\PP^n \longrightarrow \PP^{{n+3\choose 3}-1}$.
The idea will be to construct on any ACM  variety $X\subset \PP^n$ of dimension $d\ge 2$ irreducible families $\shF$
of vector bundles $\shE$ of arbitrarily rank and dimension with
the extra feature that  any $\shE \in \shF$
satisfy $\Hl^{i}(X,\shE (t))=0$ for all $t \in \ZZ$ and $2\le i \le d-1$ and $\Hl^1(X,\shE(t))=0$ for all $t\ne -1,-2$. Therefore, $X$ embedded in $\PP^{\h^0(\odi{X}(s))-1}$ through the very ample line bundle $\odi{X}(s)$, $s\ge 3$, is of wild representation type.

\vskip 2mm
Let us outline the structure of this paper. In section 2, we recall the definitions and basic facts on ACM varieties and  bundles need later. Section 3 is the heart of the paper and contains our main result (cf. Theorem \ref{main})

\vskip 2mm
\noindent {Notation.} Throughout this paper $K$ will be an algebraically closed field of characteristic zero, $R=K[x_0, x_1, \cdots ,x_n]$, $\mathfrak{m}=(x_0, \ldots,x_n)$ and $\PP^n=\Proj(R)$.
Given a non-singular variety $X$ equipped with an ample line bundle $\odi{X}(1)$, the line bundle $\odi{X}(1)^{\otimes l}$ will be denoted by $\odi{X}(l)$. For any coherent sheaf $\shE$ on $X$ we are going to denote the twisted sheaf $\shE\otimes\odi{X}(l)$ by $\shE(l)$. As usual, $\Hl^i(X,\shE)$ stands for the cohomology groups, $\h^i(X,\shE)$ for their dimension and $\Hl^i_*(X,\shE)=\oplus _{l \in \ZZ}\Hl^i(X,\shE(l))$.

\vskip 2mm \noindent {\bf Acknowledgement.} The author wishes to thank Laura Costa and Joan Pons-Llopis for  many useful
 discussions on this subject. %%%%%%%%%%%%%%%%%%%%%%%%%%%%%%%%%%%%%%%%%%%%%%%%%%%%%%%%%%%%%%

\section{Preliminaries}

 We set up here some preliminary notions mainly concerning  the definitions and basic results on ACM  schemes $X\subset \PP^n$  as well as on ACM sheaves $\shE$ on $X$ needed  in the sequel.

\begin{defn}\rm  A subscheme  $X\subseteq \PP^n$  is said to be arithmetically
Cohen-Macaulay (briefly, ACM)  if  its homogeneous
coordinate ring $R_X=R/I_X$ is a Cohen-Macaulay ring, i.e. $\depth
(R_X)=\dim (R_X)$.
\end{defn}

Thanks to the graded version of the
Auslander-Buchsbaum formula (for any finitely generated
$R$-module $M$): $$\pd (M)=n+1-\depth (M),$$ we deduce that a
subscheme $X\subseteq \PP^n$ is ACM if and only if
$ \pd (R_X)=\codim X$.
Hence, if $X\subseteq \PP^n$ is a codimension $c$ ACM subscheme, a
graded minimal free $R$-resolution of $I_X$ is of the form:
$$ 0\longrightarrow F_c   \stackrel{\varphi _c}{\longrightarrow}  F_{c-1}\stackrel{\varphi _{c-1}}{\longrightarrow}  \cdots
\stackrel{\varphi _2}{\longrightarrow}  F_1 \stackrel{\varphi _1}{\longrightarrow} F_0 \longrightarrow R_X\longrightarrow 0$$ with $F_0=R$ and
$F_{i}=\oplus _{j=1}^{\beta _i}R(-n_{j}^{i})$, $1\le i  \le c$
(in this setting, minimal means that $\im \varphi _{i}\subset
\mathfrak{m}F_{i-1}$).

\vskip 2mm

\begin{defn}\label{ACM} \rm
Let $X\subset \PP^n$ be a smooth  projective variety with a very ample invertible  sheaf $\cL$. A coherent sheaf $\shE$ on $X$ is \emph{Arithmetically Cohen Macaulay} (ACM for short) with respect to $\cL$ if
$$
\Hl^i(X,\shE \otimes \cL ^{\otimes t})=0 \quad\quad \text{    for all $1\le i\le \di X-1$ and $t\in \ZZ$.}
$$
Often, when $\cL =\odi{X}(1)$ we will omit it and we will simply say that $\shE$ is ACM.
\end{defn}

A possible way to classify ACM varieties is according to the complexity of the category of ACM sheaves that they support.
Recently, inspired by an analogous classification for quivers and for $K$-algebras of finite type, it has been proposed the classification of any  ACM variety as being of \emph{finite, tame or wild representation type} (cf. \cite{DG}). Let us introduce these definitions slightly modified with respect to the usual one:

\begin{defn} \label{wild} \rm Let $X\subseteq\PP^N$ be an ACM scheme of dimension $n$.

 (i) We say that $X$ is of \emph{finite representation type} if it has, up to twist and isomorphism, only a finite number of indecomposable ACM sheaves.

(ii)  $X$ is of \emph{tame representation type} if either it has, up to twist and isomorphism, an infinite
 discrete set of indecomposable ACM sheaves or,
 for each rank $r$, the indecomposable ACM sheaves of rank $r$ form a finite number of families of dimension at most $n$.

 (iii) $X$ is of \emph{wild representation type} if there exist $l$-dimensional families of non-isomorphic indecomposable ACM sheaves for arbitrary large $l$.
\end{defn}

The problem of classifying ACM varieties  according to the complexity of the category
of ACM sheaves that they support has recently  attired much attention and, in particular, the following problem is
 still open:

\begin{pblm}  Is the trichotomy finite representation type, tame representation type  and wild representation type exhaustive?
\end{pblm}

 One of the main achievements in this
field has been the classification of  varieties of finite representation type (cf.. \cite[Theorem C]{BGS} and \cite[p. 348]{EH}));
it turns out
that they fall into a very short list: three or less reduced points on $\PP^2$, a projective space, a non-singular quadric hypersurface $X\subseteq \PP^n$, a cubic scroll in $\PP^4$, the Veronese surface in $\PP^5$ or a rational normal curve.
As examples of a variety of tame representation type we have the elliptic curves and
the quadric cone in $\PP^3$ (cf.. \cite[Proposition 6.1]{CaH}).  Finally, on the other extreme of complexity lie those varieties that have very large
families of ACM sheaves. So far only few examples of varieties of wild representation type are known: curves of genus $g\geq 2$ (cf. \cite{DG}), del Pezzo surfaces and Fano blow-ups of points in $\PP^n$ (cf. \cite{MR4}, the cases of the cubic surface and the cubic threefold have also been handled in \cite{CH1}), ACM rational surfaces on $\PP^4$ (cf. \cite{MR1}), Segre varieties other than the quadric in $\PP^3$ (cf. \cite[Theorem 4.6]{CMP}), rational normal scrolls other than $\PP^n$, the rational normal curve  in $\PP^n$, the quadric
 in $\PP^3$ and the cubic scroll  in $\PP^4$ (cf. \cite[Theorem 3.8]{MR}) and hypersurfaces $X\subset \PP^n$ of degree $\ge 4$ (cf. \cite[Corollary 1]{To}).

As we pointed out in the introduction, the representation type of an ACM variety $X\subset \PP^n$ strongly depends on the chosen polarization and our goal will be to prove that on a projective variety $X\subset \PP^n$ there always exists a very ample line bundle $\cL$ on $X$ which embeds $X$ in $\PP^{\h^0(X,\cL)-1}$ as a variety  of wild representation type (cf Theorem \ref{main}). As immediate consequence we will have many new examples
of ACM varieties of wild representation type.

%%%%%%%%%%%%%%%%%%%%%%%%%%%%%%%%%%%%%%%%%%%%%%%%%

\section{The representation type of an ACM variety}

In this section, $X$ will be a smooth ACM variety  of dimension $d\ge 2$ in $\PP^n$ with a minimal free $R$-resolution of the following type:
\begin{equation}\label{seq} 0\longrightarrow F_c   \stackrel{\varphi _c}{\longrightarrow}
F_{c-1}\stackrel{\varphi _{c-1}}{\longrightarrow}  \cdots
\stackrel{\varphi _2}{\longrightarrow}  F_1 \stackrel{\varphi _1}{\longrightarrow} F_0
\longrightarrow R_X\longrightarrow 0\end{equation} with $c=n-d$, $F_0=R$ and
$F_{i}=\oplus _{j=1}^{\beta _i}R(-n_{j}^{i})$, $1\le i  \le c$.
Our goal is to prove that the Veronese embedding $\nu _3 :\PP^n \longrightarrow \PP^{{n+3\choose 3}-1}$  embeds $X$  as
a variety  of wild representation type. The idea will be to construct on $X$ families
of undecomposable  vector bundles of arbitrary rank and dimension which will be ACM
with respect to a $\odi{X}(3)$ but not necessarily
with respect to $\odi{X}(1)$.
The ACM bundles $\shE$ on $X$ will be constructed as kernels of certain surjective maps
between $\cO _X(t)^a$ and $\cO _X(t+1)^b$ for suitable values of $t, a ,b \in \ZZ$.

\vskip 2mm
To this end,
let us fix some notation as presented in \cite{EHi}. We consider $K$-vector spaces $A$
and $B$ of dimension $a$ and $b$, respectively. Set $V=\Hl^0(\PP^n,\odi{\PP^n}(1))$
and let $M=\Hom(B,A\otimes V)$ be the space of ($a\times b$)-matrices of linear forms.
It is well-known that there exists a bijection between the elements $\phi\in M$ and
the morphisms $$\phi:B\otimes\cO_ {\PP^n}\arr A\otimes\cO _ {\PP^n}(1).$$ Taking the
tensor with $\cO _{\PP^n}(1)$ and considering global sections, we have morphisms
$$\Hl^0(\phi(1)):\Hl^0(\PP^n, \cO _{\PP^n}(1)^b)\arr \Hl^0(\PP^n, \cO _{\PP^n}(2)^a).$$
The following result tells us under which conditions the aforementioned morphisms
$\phi$ and $\Hl^0(\phi(1))$ are surjective:

\begin{proposition}\label{hirschowitz}
For $a\geq 1$, $b\geq a+n$ and $2b\geq (n+2)a$, the set of elements $\phi\in M$ such
that $$\phi:B\otimes\odi{\PP^n}\arr A\otimes\cO _{\PP^n}(1) \text{ and }
\Hl^0(\phi(1)):\Hl^0(\PP^n, \odi{\PP^n}(1)^b)\arr \Hl^0(\PP^n, \odi{\PP^n}(2)^a)$$ are
surjective forms a non-empty open dense subset that we will denote by $V_{n}$.
\end{proposition}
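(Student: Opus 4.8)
The plan is to realize $V_n$ as the intersection of the two loci on which each map is separately surjective, to note that both are open, and then to prove that each is non-empty. Since $M\cong\mathbb{A}^{ab(n+1)}$ is irreducible, the intersection of two non-empty open subsets is automatically non-empty and dense, which is exactly the assertion. \emph{Openness} is the routine half: surjectivity of the sheaf map $\phi$ fails precisely on the closed degeneracy locus where the $a\times b$ matrix of linear forms drops rank, while surjectivity of the linear map $\Hl^0(\phi(1))$ is the maximal-rank condition on a matrix whose entries depend linearly on $\phi$; both are open conditions on $M$.

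First I would treat the locus $U_1$ where $\phi\colon B\otimes\odi{\PP^n}\arr A\otimes\odi{\PP^n}(1)$ is surjective, which is where $b\geq a+n$ enters. Consider the incidence variety
\[
\Gamma=\{(\phi,p)\in M\times\PP^n \ : \ \rk\phi(p)<a\}.
\]
For each fixed $p$ the evaluation $e_p\colon M\arr \Mat_{a\times b}(K)$, $\phi\mapsto\phi(p)$, is a surjective linear map, and the locus of matrices of rank $<a$ in $\Mat_{a\times b}(K)$ is irreducible of codimension $b-a+1$; hence each fibre of the projection $\Gamma\arr\PP^n$ has codimension $b-a+1$ in $M$ and $\di\Gamma=\di M+n-(b-a+1)$. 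Because $b\geq a+n$ forces $b-a+1>n$, the projection $\Gamma\arr M$ cannot be dominant, so a general $\phi$ drops rank nowhere and $U_1$ is non-empty (and open).

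The heart of the matter, and the step I expect to be the main obstacle, is the non-emptiness of the locus $U_2$ where $\Hl^0(\phi(1))$ is surjective. Here I would first reformulate the problem cohomologically: for $\phi\in U_1$ the kernel $\shE=\ker\phi$ is a vector bundle of rank $b-a$ sitting in $0\arr\shE\arr B\otimes\odi{\PP^n}\arr A\otimes\odi{\PP^n}(1)\arr0$, and twisting by $\odi{\PP^n}(1)$, taking cohomology, and using $\Hl^1(\PP^n,\odi{\PP^n}(1))=0$ identifies the cokernel of $\Hl^0(\phi(1))$ with $\Hl^1(\PP^n,\shE(1))$. Thus surjectivity is equivalent to the vanishing $\Hl^1(\PP^n,\shE(1))=0$, and the constraint $2b\geq(n+2)a$ is exactly the inequality $\di_K\bigl(B\otimes\Hl^0(\odi{\PP^n}(1))\bigr)\geq\di_K\bigl(A\otimes\Hl^0(\odi{\PP^n}(2))\bigr)$ that must hold for such a surjection to exist.

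It remains to exhibit one $\phi$ of maximal rank under the sharp bound $2b\geq(n+2)a$; a block-diagonal choice only settles the far weaker range $b\geq a(n+1)$, so genuine genericity is unavoidable. I would establish existence by the m\'ethode d'Horace, inducting on $n$: fix a hyperplane $H=\{h=0\}\cong\PP^{n-1}$, specialize the entries of a chosen set of columns of $\phi$ to be multiples of $h$ (so that the image of those columns under $\Hl^0(\phi(1))$ lands in $h\cdot\Hl^0(\PP^n,\odi{\PP^n}(1))^a$), and restrict the remaining columns to $H$. This splits the required surjectivity on $\PP^n$ into a surjectivity statement on $H\cong\PP^{n-1}$ for the restricted columns and a residual degree-one statement for the columns carrying the factor $h$, each governed by the analogous numerical conditions; an induction on $n$ with base case $n=1$ (where $\shE(1)$ splits and the balanced syzygy bundle satisfies $\Hl^1=0$ precisely when $2b\geq 3a$) then closes the argument, and the general $\phi$ follows by semicontinuity of $\h^1$. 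Verifying that the bounds $b\geq a+n$ and $2b\geq(n+2)a$ propagate correctly so that both the hyperplane and the residual problems again meet the numerical hypotheses is the delicate bookkeeping on which the whole argument turns.
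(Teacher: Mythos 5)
The paper does not actually prove this statement: its entire ``proof'' is the citation to \cite[Proposition 4.1]{EHi}, so what you have written is a blind reconstruction of the content of that reference. Your overall architecture is the right one and matches what Ellia--Hirschowitz do. The reduction to two open loci $U_1$, $U_2$ in the irreducible affine space $M$ is correct, and your non-emptiness argument for $U_1$ is complete: the fibre of the incidence variety $\Gamma$ over $p\in\PP^n$ is the preimage under the surjective linear evaluation map of the rank-$(<a)$ determinantal locus, of codimension $b-a+1$ in $\Mat_{a\times b}(K)$, and $b\ge a+n$ gives $b-a+1>n$, so $\Gamma$ cannot dominate $M$. Your treatment of $U_2$ correctly identifies the cokernel of $\Hl^0(\phi(1))$ with $\Hl^1(\PP^n,\shE(1))$ for $\shE=\ker\phi$, correctly reads off $2b\ge(n+2)a$ as the necessary dimension count, and correctly names the m\'ethode d'Horace with specialization to a hyperplane as the existence mechanism --- this is precisely the method of the cited paper, and your base case $n=1$ (balanced splitting on $\PP^1$, giving $\Hl^1(\shE(1))=0$ exactly when $2b\ge 3a$) checks out.

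That said, the inductive step is only gestured at, and it is the entire mathematical content of the result: you must specify exactly how many columns are specialized to multiples of $h$, verify that the restricted problem on $H\cong\PP^{n-1}$ and the residual problem both satisfy the hypotheses $b'\ge a'+(n-1)$ and $2b'\ge(n+1)a'$ for the appropriate $(a',b')$, and handle the divisibility/rounding issues that arise when the numbers do not split evenly (this typically forces a slightly more elaborate specialization or an auxiliary case analysis). You flag this yourself as ``delicate bookkeeping,'' which is honest, but as written the proposal is an accurate roadmap to the proof of \cite[Proposition 4.1]{EHi} rather than a proof; since the paper itself delegates the statement wholesale to that reference, the fair summary is that you have reconstructed the cited argument's strategy correctly while leaving its core combinatorial verification unexecuted.
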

\begin{proof} See \cite[Proposition 4.1]{EHi}.
\end{proof}

For any $2\le n$  and any $1\le a$, we denote by  $\shE _{n,a}$ any vector bundle on
$\PP^n$ given by  the exact sequence
\begin{equation}\label{eq1}
0\arr\shE
_{n,a}\arr\odi{\PP^n}(1)^{(n+2)a}\stackrel{\phi(1)}{\arr}\odi{\PP^n}(2)^{2a}\arr 0
\end{equation}
\noindent where $\phi \in V_n$.  Note that $\shE _{n,a}$   has rank $na$.

\begin{lemma}\label{cohomgroups} With the above notation we have:
\begin{itemize}
\item[(i)] $$\h^0(\PP^n,\shE _{n,a}(t))=\begin{cases} 0 &  \text{for } t\leq 0,
    \\
a((n+2)\binom{n+t+1}{n}-2\binom{n+t+2}{n}) &  \text{for } t>0.
\end{cases} $$
\item[(ii)] $$\h^1(\PP^n,\shE _{n,a}(t))=\begin{cases} 0 &  \text{ for } t<-2
    \text{ or } t\geq 0, \\ an &  \text{ for } t=-1 \\ 2a &  \text{ for } t=-2.
    \end{cases}$$
\item[(iii)] $\h^i(\PP^n,\shE _{n,a}(t))=0$ for all $t\in\ZZ$ and $2\leq i\leq
    n-1$.
\item[(iv)] $\h^n(\PP^n,\shE _{n,a}(t))=0$ for $t\geq -n-1$.
\end{itemize}
\end{lemma}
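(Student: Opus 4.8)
The plan is to read off all four cohomology statements from the defining short exact sequence \eqref{eq1}, breaking it into the long exact sequence in cohomology and feeding in the explicit cohomology of line bundles on $\PP^n$. I would set up the long exact sequence
\[
\cdots\arr\Hl^{i-1}(\odi{\PP^n}(t+2)^{2a})\arr\Hl^i(\shE_{n,a}(t))\arr\Hl^i(\odi{\PP^n}(t+1)^{(n+2)a})\arr\Hl^i(\odi{\PP^n}(t+2)^{2a})\arr\cdots
\]
obtained by twisting \eqref{eq1} by $\odi{\PP^n}(t)$ and using $\h^i(\PP^n,\odi{\PP^n}(s))$, which is nonzero only for $i=0$ (where it equals $\binom{n+s}{n}$) and for $i=n$ (where it equals $\binom{-s-1}{n}$ by Serre duality). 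The crucial input, valid precisely because $\phi\in V_n$, is Proposition \ref{hirschowitz}: it guarantees that both $\phi(1)$ is surjective as a sheaf map and that $\Hl^0(\phi(1))$ is surjective on global sections. These two surjectivities are exactly what make the connecting maps vanish in the relevant degrees.

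For part (iii) and (iv), the middle cohomology groups $\Hl^i(\odi{\PP^n}(s))$ vanish for $1\le i\le n-1$, so the long exact sequence immediately gives isomorphisms $\Hl^i(\shE_{n,a}(t))\cong\Hl^{i-1}(\odi{\PP^n}(t+2)^{2a})$ for $2\le i\le n-1$; since $i-1\ge 1$ also lies in the middle range, this vanishes, proving (iii). For (iv), the piece $\Hl^{n-1}(\odi{\PP^n}(t+2)^{2a})\arr\Hl^n(\shE_{n,a}(t))\arr\Hl^n(\odi{\PP^n}(t+1)^{(n+2)a})$ shows $\Hl^n(\shE_{n,a}(t))$ injects into $\Hl^n(\odi{\PP^n}(t+1))^{(n+2)a}$, which vanishes for $t+1\ge -n$, i.e. $t\ge -n-1$, giving (iv).

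For parts (i) and (ii), which concern $i=0,1$, I would use the single exact sequence
\[
0\arr\Hl^0(\shE_{n,a}(t))\arr\Hl^0(\odi{\PP^n}(t+1))^{(n+2)a}\stackrel{\Hl^0(\phi(t+1))}{\arr}\Hl^0(\odi{\PP^n}(t+2))^{2a}\arr\Hl^1(\shE_{n,a}(t))\arr 0,
\]
the right exactness coming from $\Hl^1(\odi{\PP^n}(t+1))=0$ for all $t$. Here $\h^0(\shE_{n,a}(t))$ is the kernel and $\h^1(\shE_{n,a}(t))$ is the cokernel of the linear map $\Hl^0(\phi(t+1))$, so both follow from understanding the rank of this map in each degree, and $\h^0-\h^1=(n+2)a\binom{n+t+1}{n}-2a\binom{n+t+2}{n}$ is forced by additivity. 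For $t\le 0$ the source $\Hl^0(\odi{\PP^n}(t+1))$ vanishes when $t<0$ and for $t=0$ one checks the map is injective, yielding $\h^0=0$; for $t>0$ the stated formula for $\h^0$ is just the Euler-characteristic difference once one knows $\h^1=0$ there. The main obstacle is therefore pinning down $\h^1$ in the three distinguished degrees $t\ge 0$, $t=-1$, $t=-2$, $t<-2$: for $t<-2$ both groups vanish so $\h^1=0$; for $t\ge 0$ I must show $\Hl^0(\phi(t+1))$ is surjective, which for $t=0$ is exactly the hypothesis $\phi\in V_n$ (surjectivity of $\Hl^0(\phi(1))$) and for $t>0$ follows by multiplying that surjection by degree-$t$ forms (the module $R$ is generated in degree one, so surjectivity of $\Hl^0(\phi(1))$ propagates to all higher twists); and for $t=-1,-2$ the source $\Hl^0(\odi{\PP^n}(t+1))$ is small or zero, so $\h^1$ equals the full target dimension, namely $2a\binom{n}{n}=2a$ at $t=-2$ and $an$ at $t=-1$ (the latter requires computing $\rk\Hl^0(\phi)=\dim\Hl^0(\odi{\PP^n}(1))^{(n+2)a}-an=(n+2)a-an$... more carefully, checking the kernel of $\Hl^0(\phi(t+1))$ has dimension $an$). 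I expect the delicate bookkeeping of exactly which connecting map is surjective versus injective in the degenerate degrees $t=-1,-2$ to be the only place requiring genuine care, with everything else reducing to binomial arithmetic.
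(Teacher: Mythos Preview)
Your overall strategy matches the paper's --- everything comes from the long exact sequence attached to \eqref{eq1} plus the known cohomology of line bundles --- and parts (iii), (iv), and the cases $t\le -2$ and $t\ge 0$ of (i)--(ii) go through as you describe. The genuine slip is at $t=-1$. You assert that ``the source $\Hl^0(\odi{\PP^n}(t+1))$ vanishes when $t<0$'', but for $t=-1$ the source is $\Hl^0(\odi{\PP^n})^{(n+2)a}\cong K^{(n+2)a}$, which is nonzero. Your later parenthetical attempt to repair this (computing $\rk\Hl^0(\phi)$ and speaking of a kernel of dimension $an$) is confused: what you actually need is that $\Hl^0(\phi)$ is \emph{injective}, i.e.\ $\h^0(\shE_{n,a}(-1))=0$, after which $\h^1(\shE_{n,a}(-1))=2a(n+1)-(n+2)a=an$ follows from your four-term sequence. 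Nothing in your write-up establishes that injectivity, and it is not automatic from $\phi\in V_n$ alone.

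The paper closes this gap with a single observation you are missing. At $t=0$ the map
\[
\Hl^0(\phi(1)):\Hl^0(\odi{\PP^n}(1))^{(n+2)a}\arr\Hl^0(\odi{\PP^n}(2))^{2a}
\]
is surjective by hypothesis, and both sides have the \emph{same} dimension $(n+1)(n+2)a=2a\binom{n+2}{2}$; hence it is an isomorphism and $\h^0(\shE_{n,a})=0$. This is the content behind your unjustified phrase ``for $t=0$ one checks the map is injective''. Now for any $t\le 0$, multiplication by a nonzero linear form gives an injection of bundles $\shE_{n,a}(t)\hookrightarrow\shE_{n,a}(t+1)$, whence $\Hl^0(\shE_{n,a}(t))\hookrightarrow\Hl^0(\shE_{n,a})=0$; this is the paper's ``a fortiori'' step, and it supplies exactly the missing $\h^0(\shE_{n,a}(-1))=0$. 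For the vanishing $\h^1(\shE_{n,a}(t))=0$ when $t\ge 0$, the paper packages your propagation-of-surjectivity argument as the statement that $\shE_{n,a}$ is $1$-regular in the sense of Castelnuovo--Mumford; either formulation is fine.
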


\begin{proof}
Since $\phi \in V_n$, by Proposition \ref{hirschowitz},
$\Hl^0(\phi(1))$ is surjective. But, since the $K$-vector spaces $\Hl^0(\PP^n,
\odi{\PP^n}(1)^{(n+2)a})$ and
$\Hl^0(\PP^n,\odi{\PP^n}(2)^{2a})$
have the same dimension, $\Hl^0(\phi(1))$ is an isomorphism and therefore $\Hl^0(\shE
_{n,a})=0$. \emph{A fortiori}, $\Hl^0(\shE_{n,a}(t))=0$ for $t\leq 0$. On the other
hand, again by the surjectivity of $\Hl^0(\phi(1))$, $\Hl^1(\shE_{n,a})=0$. Since it
is obvious that $\Hl^i(\shE_{n,a}(1-i))=0$ for $i\geq 2$ it turns out that $\shE
_{n,a}$ is 1-regular and in particular, $\Hl^1(\shE_{n,a}(t))=0$ for $t\geq 0$. The
rest of cohomology groups can be easily deduced from the long exact cohomology
sequence associated to the exact sequence (\ref{eq1}).
\end{proof}

From now on, for any $2\le n$  and any $1\le a$, we call $\shF ^X_{n,a}$ the family of {\em general} rank $na$ vector bundles $\shE $ on $X\subset \PP^n$ sitting in an exact sequence of the following
type:

\begin{equation}\label{defseq}
0\arr\shE \arr\odi{X}(1)^{(n+2)a}\stackrel{f}{\arr}\odi{X}(2)^{2a}\arr 0.
\end{equation}

\begin{proposition} \label{key} Let $X\subset \PP^n$ be a smooth ACM variety of dimension $d\ge 2$. With the above notation, we have:
\begin{itemize}
\item[(1)] A general vector bundle $\shE \in \shF ^X_{n,a}$ satisfies $$
        \begin{array}{lcc} \Hl^{i}_{*}\shE=0 & \text{ for } 2\le i \le d-1, \\
    \Hl^1(X,\shE(t))=0 & \text{ for } t\ne -1,-2 .\end{array} $$
\item[(2)]  A general vector bundle $\shE \in \shF ^X_{n,a}$ is simple.
\item [(3)] $\shF ^X_{n,a}$ is a non-empty irreducible family of dimension
    $a^2(n^2+2n-4)+1$ of  simple (hence indecomposable) rank $an$ vector bundles
    on $X$.
    \end{itemize}
\end{proposition}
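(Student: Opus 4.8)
The plan is to deduce the cohomological statements in part (1) from the corresponding facts about $\shE_{n,a}$ on $\PP^n$ established in Lemma \ref{cohomgroups}, transferring them to $X$ via the ACM hypothesis. First I would fix a general bundle $\shE \in \shF^X_{n,a}$ sitting in the defining sequence (\ref{defseq}), and twist it by $\odi{X}(t)$ to obtain
\begin{equation*}
0\arr\shE(t)\arr\odi{X}(t+1)^{(n+2)a}\stackrel{f(t)}{\arr}\odi{X}(t+2)^{2a}\arr 0.
\end{equation*}
Since $X\subset\PP^n$ is ACM, the line bundles $\odi{X}(l)$ have no intermediate cohomology, i.e. $\Hl^i(X,\odi{X}(l))=0$ for $1\le i\le d-1$ and all $l$. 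Feeding this into the long exact cohomology sequence of the twisted (\ref{defseq}) immediately kills $\Hl^i_*\shE$ for $2\le i\le d-2$ and reduces the remaining degrees to the cokernel/kernel of the maps $\Hl^0(f(t))$ and to the top pieces. The key point is to show $\Hl^1(X,\shE(t))=0$ for $t\ne -1,-2$, which amounts to the surjectivity of $\Hl^0(\odi{X}(t+1)^{(n+2)a})\to\Hl^0(\odi{X}(t+2)^{2a})$ for $t\ge 0$ together with vanishing of the source and target for $t\le -3$. For $t\le -3$ both global section groups vanish, so $\Hl^1(X,\shE(t))\hookrightarrow\Hl^1(X,\odi{X}(t+1)^{(n+2)a})=0$ by ACMness. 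For $t\ge 0$ I would argue that the multiplication map $\Hl^0(f(1))$ is surjective for the general $f$ on $X$ just as on $\PP^n$: since $X$ is ACM, the restriction $\Hl^0(\PP^n,\odi{\PP^n}(l))\to\Hl^0(X,\odi{X}(l))$ is surjective for all $l$, so a general $f$ lifted from $V_n$ in Proposition \ref{hirschowitz} keeps surjectivity of the induced map on sections, and one propagates this upward in $t$ using the standard base-point-free / Castelnuovo–Mumford regularity argument that surjectivity in one degree forces it in all higher degrees.

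For part (2), to prove that a general $\shE\in\shF^X_{n,a}$ is simple I would compute $\Hom(\shE,\shE)$ by applying $\Hom(-,\shE)$ to (\ref{defseq}). This gives an exact sequence relating $\Hom(\shE,\shE)$ to $\Hl^0(X,\shE(-1))^{(n+2)a}$, $\Hl^0(X,\shE(-2))^{2a}$, and the map between them; by part (1) and Lemma \ref{cohomgroups}(i) the group $\Hl^0(X,\shE(t))$ vanishes for $t\le 0$, so $\Hom(\shE,\shE)$ embeds into $\Hl^0(X,\shE\otimes\shE^\vee)$ controlled entirely by the linear-algebra data of $f$. The standard approach, following \cite{EHi}, is to exhibit one explicit $f$ (a suitably generic block matrix of linear forms) for which the only endomorphisms are the scalars, and then invoke semicontinuity: simplicity is an open condition, so if it holds at one point of the irreducible family it holds generically. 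I expect this to be the technical heart of the argument, since one must actually produce a matrix whose endomorphism algebra is trivial rather than merely count dimensions.

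For part (3), non-emptiness follows from Proposition \ref{hirschowitz} (the open set $V_n$ is nonempty and the constructions lift to $X$), and irreducibility follows because $\shF^X_{n,a}$ is parametrized by an open dense subset of the affine space $M=\Hom(B,A\otimes V)$ of matrices of linear forms, which is irreducible. The dimension count is the arithmetic of subtracting the fibres of the parametrization: the space of matrices has dimension $(n+2)a\cdot 2a\cdot(n+1)$ up to the relevant identifications, and one quotients by the automorphisms $\Aut(\odi{X}(1)^{(n+2)a})\times\Aut(\odi{X}(2)^{2a})=GL_{(n+2)a}\times GL_{2a}$ acting on the presentation, landing on the claimed $a^2(n^2+2n-4)+1$ after accounting for the scalar that acts trivially (the $+1$). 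Finally, since a simple bundle is automatically indecomposable, parts (1)–(2) give that the general member is a simple, hence indecomposable, rank $an$ ACM-with-respect-to-$\odi{X}(3)$ bundle, completing the statement.
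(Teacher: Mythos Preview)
Your treatment of parts (1) and (3) is essentially correct and, for the $\Hl^1$-vanishing in (1), even a bit more direct than the paper: the paper tensors the free resolution of $\odi{X}$ by $\shE_{n,a}$ and chases the cohomology of the resulting complex on $\PP^n$ using Lemma~\ref{cohomgroups}, whereas you argue on $X$ via surjectivity of the restriction maps $\Hl^0(\odi{\PP^n}(l))\twoheadrightarrow\Hl^0(\odi{X}(l))$, which indeed transfers the surjectivity of $\Hl^0(\phi(t+1))$ from $\PP^n$ to $X$. Both routes land on the same conclusion; your version avoids the syzygy bookkeeping but implicitly still rests on the $1$-regularity of $\shE_{n,a}$ proved in Lemma~\ref{cohomgroups}. (Minor point: your long exact sequence already kills $\Hl^i_*\shE$ for $2\le i\le d-1$, not just $d-2$.)

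The real gap is in part (2). After your $\Hom$-computation you correctly reduce simplicity of $\shE$ to a statement about the matrix $f$, but then you propose to ``exhibit one explicit $f$ (a suitably generic block matrix of linear forms) for which the only endomorphisms are the scalars'' and cite \cite{EHi}. That reference does not provide such a matrix, and producing one by hand for arbitrary $n$ and $a$ is not feasible. The paper takes a completely different route: it views $f^\vee$ as a general point of $M\cong K^{n+1}\otimes K^{2a}\otimes K^{(n+2)a}$ under the natural $GL((n+2)a)\times GL(2a)$-action, observes that the Tits form satisfies $(2a)^2+((n+2)a)^2-2(n+1)(n+2)a^2<0$, and invokes Kac's theorem \cite[Theorem~4]{Kac} on representations of the generalized Kronecker quiver to conclude that the generic stabilizer is exactly the diagonal $K^*$. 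A nontrivial endomorphism $\phi$ of $\shE^\vee$ is then lifted, via the vanishing $\Ext^1(\odi{X}(-1)^{(n+2)a},\odi{X}(-2)^{2a})=0$, to a pair $(B,C)\neq(\mu\operatorname{Id},\mu\operatorname{Id})$ with $AC=BA$, producing a stabilizer of dimension $>1$ and a contradiction. This appeal to quiver representation theory is the genuine content of the simplicity proof, and your proposal does not contain it.
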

\begin{proof} (1) Since  $\Hl^{i}(X,\shE(t))=0$ for all $t\in \ZZ$ and $ 2\le i \le
d-1$, and $\Hl^1(X,\shE(t))=0$  for $t\ne -1,-2$ are open conditions, it is enough to
exhibit a vector bundle $\shE \in \shF ^X_{n,a}$ verifying these vanishing.
Tensoring the exact sequence (\ref{eq1}) with $\odi{X}$, we get
\begin{equation}
0\arr \shE:= \shE _{n,a}\otimes \odi{X}\arr\odi{X}(1)^{(n+2)a}\arr
\odi{X}(2)^{2a}\arr 0.
\end{equation}
Taking cohomology, we immediately obtain $\Hl^{i}(X,\shE(t))=0$ for all $t\in \ZZ$ and
$ 2\le i \le d-1$.  On the other hand, we tensor with $\shE _{n,a}$ the exact sequence (\ref{seq}) sheafiffied

$$0\longrightarrow \oplus _{j=1}^{\beta _c}\odi{\PP^n}(-n_{j}^{c})   \stackrel{\varphi _c}{\longrightarrow}
\oplus _{j=1}^{\beta _{c-1}}\odi{\PP^n}(-n_{j}^{c-1})
\stackrel{\varphi _{c-1}}{\longrightarrow}  \cdots
\stackrel{\varphi _2}{\longrightarrow}  \oplus _{j=1}^{\beta _1}\odi{\PP^n}(-n_{j}^{1})   \stackrel{\varphi _1}{\longrightarrow} \odi{\PP^n}\stackrel{\varphi _0}{\longrightarrow}
 \odi{X}\longrightarrow 0 $$ and we get
\begin{equation}\label{seq2} 0\longrightarrow \oplus _{j=1}^{\beta _c}\shE _{n,a}(-n_{j}^{c})   \stackrel{\varphi _c}{\longrightarrow}
\oplus _{j=1}^{\beta _{c-1}}\shE _{n,a}(-n_{j}^{c-1})
\stackrel{\varphi _{c-1}}{\longrightarrow}  \cdots
\stackrel{\varphi _{i+1}}{\longrightarrow}
\oplus _{j=1}^{\beta _{i}}\shE _{n,a}(-n_{j}^{i})\stackrel{\varphi _{i}}{\longrightarrow}
\end{equation}
$$
\cdots \stackrel{\varphi _2}{\longrightarrow}  \oplus _{j=1}^{\beta _1}\shE _{n,a}(-n_{j}^{1})   \stackrel{\varphi _1}{\longrightarrow} \shE _{n,a}\stackrel{\varphi _0}{\longrightarrow}
 \shE= \shE _{n,a}\otimes \odi{X}  \longrightarrow 0. $$ Set $\shH_i:=\ker(\varphi _{i})$, $0\le i \le c-2$. Cutting the exact sequence (\ref{seq2}) into short exact sequences and taking cohomology, we obtain
$$ \cdots \arr \Hl^1(\PP^n,\shE _{n,a}(t)) \arr \Hl^1(X,\shE(t)) \arr \Hl^2(\PP^n,\shH
_{0}(t)) \arr \cdots ,$$
$$ \cdots \arr \Hl^2(\PP^n,\oplus  _{j=1}^{\beta _1}\shE _{n,a}(-n_{j}^{1}+t)) \arr \Hl^2(\PP^n,\shH _0(t)) \arr \Hl^3(\PP^n,\shH
_{1}(t)) \arr \cdots ,$$
$$ \cdots $$
$$ \cdots \arr \Hl^{c-1}(\PP^n,\oplus  _{j=1}^{\beta _{c-2}}\shE _{n,a}(-n_{j}^{c-2}+t)) \arr \Hl^{c-1}(\PP^n,\shH _{c-3}(t)) \arr \Hl^c(\PP^n,\shH
_{c-2}(t)) \arr \cdots ,$$
$$ \cdots \arr \Hl^{c}(\PP^n,\oplus  _{j=1}^{\beta _{c-1}}\shE _{n,a}(-n_{j}^{c-1}+t)) \arr \Hl^{c}(\PP^n,\shH _{c-2}(t)) \arr \Hl^{c+1}(\PP^n, \oplus  _{j=1}^{\beta _{c}}\shE _{n,a}(-n_{j}^{c}+t)) \arr \cdots ,$$

\noindent Using Lemma \ref{cohomgroups}, we conclude that $\Hl^1(X,\shE(t))=0$  for $t\ne -1,
-2$.

(2) A general vector bundle $\shE \in \shF ^X_{n,a}$ sits in an exact sequence $$0\arr
\shE \stackrel{g}{\arr}\odi{X}(1)^{(n+2)a}\stackrel{f}{\arr}
\odi{X}(2)^{2a}\arr 0$$ and to check that $\shE$ is simple is equivalent to check that
$\shE ^{\vee}$ is simple.
Notice that the morphism $f^{\vee} :\odi{X}(-2)^{2a}\longrightarrow
\odi{X}(-1)^{(n+2)a}$ appearing in the exact sequence
\begin{equation} \label{dualseq} 0\arr \odi{X}(-2)^{2a}\stackrel{f^{\vee}}{\arr}\odi{X}(-1)^{(n+2)a}
\stackrel{g^{\vee}}{\arr}
\shE ^{\vee}
\arr 0\end{equation}
is a general element of the $K$-vector space
$$M:=\Hom(\odi{X}(-2)^{2a},\odi{X}(-1)^{(n+2)a})\cong K^{n+1}\otimes K^{2a}\otimes
K^{(n+2)a}$$ because $\Hom(\odi{X}(-2),\odi{X}(-1))\cong\Hl^0(\odi{X}(1))\cong
\Hl^0(\odi{\PP^n}(1))\cong K^{n+1}$. Therefore, $f^{\vee }$ can be represented by a
$(n+2)a\times 2a$ matrix $A$ with entries in $\Hl^0(\odi{\PP^n}(1))$. Since
$\Aut(\odi{X}(-1)^{(n+2)a})\cong GL((n+2)a)$ and $\Aut(\odi{X}(-2)^{2a})\cong GL(2a)$,
the group $GL((n+2)a)\times GL(2a)$ acts naturally on $M$ by

\begin{center}
\begin{tabular}{ c c l}
$GL((n+2)a)\times GL(2a)\times M$ & $\longrightarrow$ & $M$ \\
$(g_1,g_2,A)$  & $\mapsto$ & $g_1^{-1}Ag_2$.\\
\end{tabular}
\end{center}

For all $A\in M$ and $\lambda\in K^*$, $(\lambda Id_{(n+2)a},\lambda Id_{2a})$ belongs
to the stabilizer of $A$ and, hence, $\di_K Stab(A)\geq 1$. Since
$(2a)^2+(n+2)^2a^2-2a(n+1)(n+2)a<0$, it follows from   \cite[Theorem 4]{Kac}  that
$\di_K Stab(A)= 1$.  We will now check that $\shE^{\vee}$ is simple.
Otherwise, there exists a non-trivial morphism $\phi:\shE^{\vee}\arr\shE^{\vee}$ and composing with $g^{\vee}$  we get a morphism $$\overline{\phi}=\phi\circ g^{\vee}:\odi{X}(-1)^{(n+2)a}\arr\shE^{\vee}.$$ Applying  $\Hom(\odi{X}(-1)^{(n+2)a}, -)$ to the exact sequence(\ref{dualseq}) and taking into account
that $$\Hom(\odi{X}(-1)^{(n+2)a},\odi{X}(-2)^{2a})=
\Ext^1(\odi{X}(-1)^{(n+2)a},\odi{X}(-2)^{2a})=0$$ we obtain $\Hom(\odi{X}(-1)^{(n+2)a},\odi{X}(-1)^{(n+2)a})
\cong\Hom(\odi{X}(-1)^{(n+2)a},\shE^{\vee})$. Therefore, there is a non-trivial morphism $\widetilde{\phi}\in\Hom(\odi{X}(-1)^{(n+2)a},\odi{X}(-1)^{(n+2)a})$ induced by $\overline{\phi}$ and represented by a matrix $B \neq \mu Id\in \Mat_{(n+2)a\times (n+2)a}(K)$ such that the following diagram commutes:
$$
\xymatrix{
0 \ar[r] &  \odi{X}(-2)^{2a} \ar[r]_{f^{\vee}} \ar[d]^{C} & \odi{X}(-1)^{(n+2)a}  \ar[d]^{B} \ar[dr]^{\overline{\phi}} \ar[r]_{g^{\vee }}& \shE \ar[r] \ar[d]^{\phi} & 0\\
0 \ar[r] &  \odi{X}(-2)^{2a} \ar[r]_{f^{\vee}} & \odi{X}(-1)^{(n+2)a} \ar[r]_{g^{\vee}} & \shE ^{\vee}\ar[r] & 0\\
}
$$
\noindent where $C\in \Mat_{2a\times 2a}(K)$ is the matrix associated to $\widetilde{\phi}_{|\odi{X}(-2)^{2a}}$. Then the pair $(C,B)\neq (\mu Id,\mu Id)$ verifies $AC=BA$. Let us consider an element $\alpha\in K$ that does not belong to the set of eigenvalues of $B$ and $C$. Then the pair $(B-\alpha Id,C-\alpha Id)\in GL((n+2)a)\times GL(2a)$ belongs to $Stab(f)$ and therefore $\di_K Stab(f)>1$ which is a contradiction. Thus, $\shE$ is simple.

(3) It only remains to compute the dimension of $\shF ^X_{n,a}$. Since the isomorphism class of a general
vector bundle $\shE \in \shF ^X_{n,a}$  associated to a morphism
$\phi\in M:=\Hom(\odi{X}^{(n+2)a},\odi{X}(1)^{2a})$ depends only on the orbit of $\phi$ under the action of $GL((n+2)a)\times GL(2a)$ on $M$, we have:
$$\begin{array}{lcl}
\di \shF ^X_{n,a} & = & \di M - \di\Aut(\odi{X}^{(n+2)a})-\di\Aut(\odi{X}(1)^{2a})+1
\\
& = & 2a^2(n+2)(n+1)-a^2(n+2)^2-4a^2+1=a^2(n^2+2n-4)+1. \end{array}$$
\end{proof}

We are now ready to prove the main result of this short paper and give an affirmative answer to Problem \ref{problem}.
\begin{theorem}\label{main}
Let $X\subset \PP^n$ be a smooth ACM variety of dimension $d\ge 2$. The very ample line bundle $\odi{X}(s)$, $s\ge 3$, embeds $X$ in $\PP^{\h^0(\odi{X}(s))-1}$ as a variety of wild representation type.
\end{theorem}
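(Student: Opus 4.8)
The plan is to deduce the theorem almost directly from Proposition \ref{key}, which already carries the substantial construction: what remains is to check that passing to the polarization $\cL = \odi{X}(s)$ with $s\ge 3$ turns \emph{both} $X$ and the general bundles $\shE \in \shF^X_{n,a}$ into ACM objects, and then to read off wildness from the growth of $\di \shF^X_{n,a}$ in $a$. In this sense the theorem is essentially a corollary of Proposition \ref{key}; the only genuinely new verifications are the two compatibility checks with the Veronese polarization, and of these the nontrivial one is that $X$ remains ACM after re-embedding.

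First I would verify that $X$, re-embedded in $\PP^{\h^0(\odi{X}(s))-1}$ by the complete linear system $|\odi{X}(s)|$, is again an ACM scheme, so that Definition \ref{wild} applies. Since $X \subset \PP^n$ is ACM, its homogeneous coordinate ring $R_X$ is Cohen-Macaulay and $X$ is projectively normal, whence $(R_X)_{st} = \Hl^0(X, \odi{X}(st))$ for all $t$; consequently the coordinate ring of the re-embedded $X$ is exactly the $s$-th Veronese subring $R_X^{(s)} = \oplus_t (R_X)_{st}$, which is Cohen-Macaulay because a Veronese subring of a Cohen-Macaulay graded ring is again Cohen-Macaulay. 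Hence the re-embedded $X$ is ACM. This is the step I expect to require the most care, as it rests on this commutative-algebra fact and on projective normality, rather than on the cohomology already computed.

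Next, for $\cL = \odi{X}(s)$, ACM-ness of a sheaf $\shE$ means $\Hl^i(X, \shE(st)) = 0$ for $1 \le i \le d-1$ and all $t\in\ZZ$. For a general $\shE \in \shF^X_{n,a}$, Proposition \ref{key}(1) gives $\Hl^i_* \shE = 0$ for $2 \le i \le d-1$, which covers those $i$, while for $i = 1$ the same proposition gives $\Hl^1(X, \shE(m)) = 0$ for every $m \ne -1, -2$. The crucial (and essentially the only) arithmetic point is that when $s \ge 3$ the integers $st$ never equal $-1$ or $-2$, so $\Hl^1(X, \shE(st)) = 0$ for all $t$; thus every general $\shE \in \shF^X_{n,a}$ is ACM with respect to $\cL = \odi{X}(s)$. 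Finally, to conclude, I would invoke Proposition \ref{key}(3): for each $a \ge 1$, $\shF^X_{n,a}$ is a non-empty irreducible family of dimension $a^2(n^2+2n-4)+1$ of simple, hence indecomposable, ACM bundles of rank $an$. Since $n \ge d \ge 2$, the coefficient $n^2+2n-4 \ge 4$ is positive, so $\di \shF^X_{n,a} \to \infty$ as $a \to \infty$, exhibiting $l$-dimensional families of non-isomorphic indecomposable ACM sheaves for arbitrarily large $l$, which is precisely wild representation type in the sense of Definition \ref{wild}(iii), and answers Problem \ref{problem}(a) with $N_X = \h^0(\odi{X}(3))-1$.
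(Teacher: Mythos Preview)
Your proof is correct and follows essentially the same approach as the paper: invoke Proposition \ref{key}, observe that for $s\ge 3$ the multiples $st$ never hit the two exceptional twists $-1,-2$, so the general bundles in $\shF^X_{n,a}$ are ACM with respect to $\odi{X}(s)$, and then let $a\to\infty$ to get arbitrarily large families. If anything, you are more thorough than the paper, which omits the explicit check that the re-embedded $X$ remains ACM.
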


\begin{proof}
Indeed, given any integer $p$, we choose an integer $a$ such that $an\ge p$. By Proposition \ref{key}, there exists a family $\shF ^X_{n,a}$ of dimension $a^2(n^2+2n-4)+1$ of simple (hence undecomposable) vector bundles $\shE$ on $X$ of rank $an$. For a general $\shE \in \shF ^X_{n,a}$, we have $\Hl^{i}(X,\shE(t))=0$ for all $t\in \ZZ$ and $ 2\le i \le d-1$, and   $\Hl^1(X,\shE(t))=0$ for $t\ne -2,-1$. Therefore,  the very ample line bundle $\cO _{X}(s)$ embeds $X$ in $\PP^{\h^0(\odi{X}(s))-1}$,
as a variety  of wild representation type.
\end{proof}

\begin{corollary} The smallest possible integer $N_X$ such that $X$ embeds as a variety of wild representation type is bounded by $N_X\le {n+3\choose 3}-1$.
\end{corollary}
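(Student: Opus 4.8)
The plan is to deduce the corollary directly from Theorem \ref{main} by specializing the polarization to the smallest admissible value and translating the resulting embedding into an ambient projective space. First I would recall that the very ample line bundle $\odi{X}(s)$ with $s \ge 3$ embeds $X$ as a variety of wild representation type, so the minimal such $s$ for which the theorem applies is $s = 3$. The task is then purely to identify the dimension of the projective space into which $\odi{X}(3)$ embeds $X$.

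The key step is to recognize that the embedding by $\odi{X}(3)$ is exactly the restriction to $X$ of the Veronese $3$-uple embedding $\nu_3 : \PP^n \longrightarrow \PP^{\binom{n+3}{3}-1}$. Concretely, $\odi{X}(3) = \odi{\PP^n}(3)|_X$, and the global sections of $\odi{\PP^n}(3)$ are the degree-$3$ monomials in $x_0, \ldots, x_n$, of which there are exactly $\binom{n+3}{3}$. Thus $\h^0(\PP^n, \odi{\PP^n}(3)) = \binom{n+3}{3}$, and the composite embedding $\nu_{3|X}$ lands in $\PP^{\binom{n+3}{3}-1}$. Since $X$ embedded this way is of wild representation type by Theorem \ref{main}, there exists at least one embedding of $X$ as a variety of wild representation type inside $\PP^{\binom{n+3}{3}-1}$, which forces the minimal such $N_X$ to satisfy $N_X \le \binom{n+3}{3}-1$.

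The only subtlety I would flag, and the mild obstacle, is the discrepancy between $\h^0(X, \odi{X}(3))$ appearing in the theorem and $\h^0(\PP^n, \odi{\PP^n}(3)) = \binom{n+3}{3}$ used for the bound. Here the ACM hypothesis on $X$ is precisely what resolves the gap: for an ACM variety the restriction map $\Hl^0(\PP^n, \odi{\PP^n}(t)) \to \Hl^0(X, \odi{X}(t))$ is surjective for all $t$, so the image of $X$ under $\nu_{3|X}$ spans a linear subspace of $\PP^{\binom{n+3}{3}-1}$, and the intrinsic embedding by $\odi{X}(3)$ factors through this ambient space. Whether $\h^0(X,\odi{X}(3))$ equals $\binom{n+3}{3}$ or is strictly smaller, the variety $X$ sits inside $\PP^{\binom{n+3}{3}-1}$ as a variety of wild representation type, which is all that is needed; the bound $N_X \le \binom{n+3}{3}-1$ follows because $N_X$ is defined as the \emph{smallest} ambient dimension realizing wild type, hence is at most the dimension of any explicit witness. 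I would therefore state the proof in one or two lines, invoking Theorem \ref{main} with $s=3$ and the monomial count for $\h^0(\odi{\PP^n}(3))$.
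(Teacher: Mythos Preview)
Your proposal is correct and matches the paper's intended argument: the corollary is stated without a separate proof and is meant to follow immediately from Theorem~\ref{main} by taking $s=3$ and identifying the embedding by $\odi{X}(3)$ with the restriction of the Veronese $3$-uple map $\nu_3:\PP^n\to\PP^{\binom{n+3}{3}-1}$, exactly as in the abstract and introduction. Your observation about the possible gap between $\h^0(X,\odi{X}(3))$ and $\binom{n+3}{3}$, and its resolution via the surjectivity of restriction for ACM varieties, is a nice clarification that the paper leaves implicit.
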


We will finish the paper with a final remark concerning the terminology.

\begin{finalremark}\rm  In \cite[Definition 1.4]{DG}, Drozd and Greuel  introduced two definitions of wildness: geometrically wild and algebraically wild. Roughly speaking a projective variety $X\subset \PP^n$ is {\em geometrically wild} if the corresponding homogeneous coordinate ring $R_X$ has arbitrarily large families of indecomposable Maximal Cohen-Macaulay  $R_X$-modules. $X$ is said to be {\em algebraically wild} if
for every finitely generated $k$-algebra $A$ there exists a family
of Maximal Cohen-Macaulay $R_X$-modules $\shM$ such that the following conditions
hold:
\begin{itemize}
\item[(1)] For every indecomposable $A$-module $L$ the $R_X$-module $\shM\otimes _A L$ is indecomposable.
\item[(2)] If $\shM\otimes _A L \cong \shM\otimes _A L'$
 for some finite dimensional $A$-modules
$L$ and $L'$, then $L\cong L'$.
\end{itemize}

It is not difficult to check that if $X$ is algebraically wild then it is also
geometrically wild. It is not known though conjectured whether the converse is true, i.e. if geometrically wild implies algebraically wild.

It is worthwhile to point out that the constructed embedding proves the geometrical wildness of $X\subset \PP^{N_X}$ but it remains open whether it is also algebraically  wild.
\end{finalremark}
%%%%%%%%%%%%%%%%%%%%%%%%%%%%%%%%%%%%%%%%

%\bibliographystyle{amsalpha}
%\bibliography{biblio}
\providecommand{\bysame}{\leavevmode\hbox to3em{\hrulefill}\thinspace}
\providecommand{\MR}{\relax\ifhmode\unskip\space\fi MR }
% \MRhref is called by the amsart/book/proc definition of \MR.
\providecommand{\MRhref}[2]{%
  \href{http://www.ams.org/mathscinet-getitem?mr=#1}{#2}
}
\providecommand{\href}[2]{#2}

\end{document}